\renewcommand{\geq}{\geqslant}
\renewcommand{\leq}{\leqslant}
\newcommand{\Osh}{{\mathcal O}}                        
\newcommand{\K}{\mathrm{K}}                            
\newcommand{\kk}{\mathbf{k}}
\newcommand{\KK}{\mathbf{K}}
\newcommand{\FF}{\mathbf{F}}
\newcommand{\PP}{\mathbb{P}} 
\newcommand{\QQ}{\mathbb{Q}} 
\newcommand{\RR}{\mathbb{R}} 
\newcommand{\ZZ}{\mathbb{Z}} 
\newtheorem{theorem}{Theorem}[section]
\theoremstyle{definition}
\newtheorem{defns}[theorem]{Definitions}
\newtheorem{remark}[theorem]{Remark}
\newtheorem{example}[theorem]{Example}
\numberwithin{equation}{section}
\begin{document}

\title[Harder-Narasimhan polygons and Laws of Large Numbers]{Vertices of the Harder and Narasimhan polygons and the Laws of Large Numbers
}

\author{Nathan Grieve}
\address{Department of Mathematics \& Computer Science,
Royal Military College of Canada, P.O. Box 17000,
Station Forces, Kingston, ON, K7K 7B4, Canada
}
\address{School of Mathematics and Statistics, 4302 Herzberg Laboratories, Carleton University, 1125 Colonel By Drive, Ottawa, ON, K1S 5B6, Canada
}
\address{
D\'{e}partement de math\'{e}matiques, Universit\'{e} du Qu\'{e}bec \`a Montr\'{e}al, Local PK-5151, 201 Avenue du Pr\'{e}sident-Kennedy, Montr\'{e}al, QC, H2X 3Y7, Canada}
\email{nathan.m.grieve@gmail.com}%

\begin{abstract} 
We build on the recent techniques of Codogni and Patakfalvi, from \cite{Codogni:Patakfalvi:2021}, which were used to establish theorems about semi-positivity of the Chow Mumford line bundles for families of $\K$-semistable Fano varieties.  Here we apply the Central Limit Theorem to ascertain the asymptotic probabilistic nature of the vertices of the \emph{Harder and  Narasimhan polygons}.  
As an application of our main result, we use it to establish a filtered vector space analogue of the main technical result of \cite{Codogni:Patakfalvi:2021}.  In doing so, we expand upon the slope stability theory, for filtered vector spaces, that was initiated by Faltings and W\"{u}stholz \cite{Faltings:Wustholz}.  One source of inspiration for our abstract study of \emph{Harder and  Narasimhan data}, which is a concept that we define here,
is the lattice reduction methods of Grayson \cite{Grayson:1984}.  Another is the work of Faltings and W\"{u}stholz, \cite{Faltings:Wustholz}, and Evertse and Ferretti, \cite{Evertse:Ferretti:2013}, which is within the context of Diophantine approximation for projective varieties.
\end{abstract}

\thanks{
\emph{Mathematics Subject Classification (2020):} 14F06, 11J68, 14J10. \\
\emph{Key Words:} 
Harder and Narasimhan filtrations, Diophantine approximation, $\K$-stability. \\
I thank the Natural Sciences and Engineering Research Council of Canada for their support through my grants DGECR-2021-00218 and RGPIN-2021-03821. \\
}

\maketitle

\section{Introduction}

Our purpose here, is  to continue our work which is at the intersection of $\K$-stability and Diophantine approximation for projective varieties (\cite{Grieve:Function:Fields},  \cite{Grieve:2018:autissier},  \cite{Grieve:Divisorial:Instab:Vojta}, \cite{Grieve:toric:gcd:2019}, \cite{Grieve:points:bounded:degree}, \cite{Grieve:MVT:2019} and  \cite{Grieve:chow:approx}).  
In more specific terms, we expand upon the theory Harder and Narasimhan filtrations for holomorphic vector bundles on compact Riemann surfaces.  Especially, we define and study asymptotic probabilistic features of \emph{Harder and Narasimhan data}. (See Section \ref{HN:polygons:and:vertices} and Theorems \ref{main:theorem:intro} and \ref{filt:HN:tensor:cor}.)

Recall, that Harder and Narasimhan's theory is now classical.  It was conceived in \cite{Harder:Narasimhan:1974}.  Recently, a significant application of this theory, building on earlier work of Viehweg, has been given by Codogni and Patafalvi \cite{Codogni:Patakfalvi:2021}.  Among other results, in \cite{Codogni:Patakfalvi:2021}, the theory of Harder and Narasimhan filtrations is applied to obtain results for the Chow-Mumford line bundles that arise within the context of families of $\K$-stable klt Fano varieties.  

For example, the concept of Harder and Narasimhan filtration is a key, more technical, tool, which is used in \cite{Codogni:Patakfalvi:2021}, to establish the following result.

\begin{theorem}[{\cite[Theorem 1.1 (a)]{Codogni:Patakfalvi:2021}}]
Fixing an integer $n>0$ and a rational number $v>0$, let $\mathcal{M}^{\K-\mathrm{ss}}_{n,v}$ be the moduli stack of those $\K$-semistable dimension $n$ Fano varieties, which have anti-canonical volume equal to $v$.  Over $\mathcal{M}^{\K-\mathrm{ss}}_{n,v}$, let $\lambda$ be the Chow-Mumford line bundle.  Then $\lambda$ and its descent along the good moduli space morphism 
$$\mathcal{M}^{\K-\mathrm{ss}}_{n,v} \rightarrow \mathrm{M}_{n,v}^{\K,\mathrm{ps}}$$ 
are numerically effective.
\end{theorem}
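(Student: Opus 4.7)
My plan is to verify the nefness of $\lambda$ curve by curve, and then to deduce the descent statement by general moduli theory. First I would take an arbitrary smooth projective curve $C$ together with a morphism $C \to \mathcal{M}^{\K-\mathrm{ss}}_{n,v}$; this amounts to a flat family $f \colon X \to C$ whose geometric fibres are $\K$-semistable Fano varieties of dimension $n$ with anti-canonical volume $v$. It is enough to check that $\deg_C f^{*} \lambda \geq 0$ for every such family. Via the Knudsen--Mumford expansion of $f_{*} \Osh_{X}(-m K_{X/C})$ in the parameter $m$, this degree can be rewritten, up to a positive constant, as the leading term of an intersection-theoretic expression in $(-K_{X/C})^{n+1}$; this is the usual Chow--Mumford degree of the family.

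Next I would bring in the Harder and Narasimhan theory. For each sufficiently divisible $m$, the pushforward $\mathcal{E}_{m} := f_{*} \Osh_{X}(-m K_{X/C})$ is a vector bundle on $C$ and carries a Harder and Narasimhan filtration
\begin{equation*}
0 = \mathcal{E}_{m,0} \subsetneq \mathcal{E}_{m,1} \subsetneq \cdots \subsetneq \mathcal{E}_{m,\ell_{m}} = \mathcal{E}_{m}.
\end{equation*}
My proposal is to rewrite $\deg_C f^{*} \lambda$ as an $m \to \infty$ limit of a weighted sum of the slopes $\mu(\mathcal{E}_{m,i}/\mathcal{E}_{m,i-1})$, with weights determined by the vertices of the associated Harder and Narasimhan polygons. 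Viewing these polygons as normalised empirical distributions of slopes against ranks, one expects them to converge to a limit profile governed by fibrewise $\K$-semistability, and a Central Limit Theorem-type statement, in the spirit of the techniques announced in the abstract, should supply the required asymptotic control.

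The crucial input is that $\K$-semistability of each fibre, spread out to the total space via the test configurations induced by the sub-bundles $\mathcal{E}_{m,i} \subset \mathcal{E}_{m}$, should yield a non-negativity statement for the associated generalised Futaki invariants. Translating these into slope inequalities on $C$ is, in my view, the main technical obstacle: from each $\mathcal{E}_{m,i}$ one must extract an honest test configuration of the generic fibre of $f$ whose Donaldson--Futaki invariant bounds $\mu(\mathcal{E}_{m,i})$ from above by the overall average slope $\mu(\mathcal{E}_{m})$. Summing these inequalities with the weights prescribed by the asymptotic polygon profile then gives $\deg_C f^{*} \lambda \geq 0$. Finally, for the descent assertion, I would invoke the general fact that for a good moduli space morphism $\pi \colon \mathcal{X} \to X$ a line bundle $L$ on $X$ is nef if and only if $\pi^{*} L$ is nef on $\mathcal{X}$; applying this to $\lambda$ and to $\mathcal{M}^{\K-\mathrm{ss}}_{n,v} \to \mathrm{M}_{n,v}^{\K,\mathrm{ps}}$ transfers the nefness on the stack to its descent on the good moduli space.
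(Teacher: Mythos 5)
First, a point of comparison: this paper does not prove the statement at all --- it is quoted verbatim as \cite[Theorem 1.1 (a)]{Codogni:Patakfalvi:2021} to motivate the abstract study of Harder and Narasimhan data, and the paper's own contributions (Theorems \ref{main:theorem:intro} and \ref{filt:HN:tensor:cor}) are filtered-vector-space analogues of one ingredient of that proof, namely \cite[Theorem 5.11]{Codogni:Patakfalvi:2021}. So your proposal has to be measured against the argument of \cite{Codogni:Patakfalvi:2021} itself. Your overall architecture is the right one: reduce nefness to families $f \colon X \to C$ over smooth projective curves, express $\deg_C f^{*}\lambda$ through the Knudsen--Mumford expansion as a positive multiple of $-(-K_{X/C})^{n+1}$, study the Harder and Narasimhan filtrations of $\mathcal{E}_m = f_{*}\Osh_X(-mK_{X/C})$, and handle the descent assertion using that for a good moduli space morphism a line bundle downstairs is nef if and only if its pullback to the stack is.

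However, the step you yourself flag as ``the main technical obstacle'' is precisely the content of the theorem, and the mechanism you propose for it is not the one that works. You suggest extracting from each $\mathcal{E}_{m,i}$ a test configuration of the generic fibre whose Donaldson--Futaki invariant bounds $\mu(\mathcal{E}_{m,i})$ from above by $\mu(\mathcal{E}_m)$; the Harder and Narasimhan subbundles do not naturally yield test configurations of the fibres, and no such direct slope inequality appears in \cite{Codogni:Patakfalvi:2021}. What Codogni and Patakfalvi actually do is argue by contraposition: if $\deg_C f^{*}\lambda < 0$, then $(-K_{X/C})^{n+1} > 0$, so for large $m$ the Harder and Narasimhan data of $\mathcal{E}_m$ has positive degree; the probabilistic count of vertices (their Theorem 5.11, the prototype for Theorem \ref{main:theorem:intro} here) shows that the subsheaf of $\mathcal{E}_m^{\otimes q}$ spanned by tensor products of Harder and Narasimhan pieces of large total slope has asymptotically full rank; Viehweg's fibre product trick then produces on a general fibre $X_t$ an effective $\QQ$-divisor $\QQ$-linearly equivalent to a multiple of $-K_{X_t}$ with large multiplicity, forcing $\delta(X_t) < 1$ and contradicting $\K$-semistability via its valuative characterisation --- an input your sketch never invokes. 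Without this contrapositive construction, or a genuine substitute for it, the plan does not close: as it stands it is a correct outline of the framing with the decisive step missing.
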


In terms of Harder and Narasimhan filtrations and Diophantine approximation, the main result is the theorem of Faltings and W\"{u}stholz, from \cite{Faltings:Wustholz}, and the refinement which was established by Evertse and Ferretti in \cite{Evertse:Ferretti:2013}.  Here, in Theorem \ref{FW:theorem} below, we state a version of that result.  McKinnon and Roth formulated a corresponding statement for linear systems \cite{McKinnon-Roth}.    

Investigations and expansions of Schmidt's Subspace Theorem, from the viewpoint of linear systems, is another topic of continued recent and ongoing interest (\cite{Ru:Wang:2016}, \cite{Ru:Vojta:2016}, \cite{Heier:Levin:2017}, \cite{Grieve:2018:autissier}, \cite{Grieve:points:bounded:degree}).  Finally, recall, as explained in \cite{Evertse:Ferretti:2013},  that Schmidt's Subspace Theorem can be deduced from the inequalities of Faltings and W\"{u}stholz.  Conversely, Schmidt's Subspace Theorem can be used to imply the inequalities of Faltings and W\"{u}stholz.  In particular, the following form of the celebrated theorem of Faltings and W\"{u}stholz is stated in \cite[p.~ 514]{Evertse:Ferretti:2013}.

\begin{theorem}[\cite{Faltings:Wustholz}, \cite{McKinnon-Roth}, \cite{Evertse:Ferretti:2013}]\label{FW:theorem}
Let $\KK$ be a number field $M_{\KK}$ its set of places and 
$S \subseteq M_{\KK}$ 
a finite subset.  For  each 
$v \in M_{\KK}$ 
fix normalized absolute values $|\cdot|_v$ so that the product theorem holds true with multiplicities equal to one.  For each $v \in S$, fix linearly independent linear forms 
$$
\ell_{0v}(x),\dots,\ell_{nv}(x) \in \KK[x_0,\dots,x_n] \text{.}
$$
together with nonnegative weights $d_{iv} \in \RR_{\geq 0}$ which have the property that
$$
\sum_{v \in  S} \sum_{i=0}^n d_{iv} >  n + 1 \text{.}
$$
Let $H_{\Osh_{\PP^n_{\KK}}(1)}(\cdot)$ be the multiplicative height function on projective $n$-space $\PP^n_{\KK}$ with respect to the tautological line bundle $\Osh_{\PP^n_{\KK}}(1)$.  Let $\epsilon > 0$.  Then, with this notation and hypothesis, there exists a single, effectively computable proper linear subspace 
$$T \subsetneq \PP^n(\KK)$$ 
such that system
$$
\frac{| \ell_{iv}(x) |_v }{ \max_{0 \leq i \leq n } |x_i|_v} \leq H_{\Osh_{\PP^n}(1)}(x)^{-d_{iv}} \text{,}
$$
for $v \in S$ and $i = 0,\dots,n$,
admits at most a finite number of solutions 
$$
x \in \PP^n(\KK) \setminus T \text{.}
$$
\end{theorem}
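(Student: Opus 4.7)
The plan is to derive the theorem as a consequence of Schmidt's Subspace Theorem in its number field formulation, along the lines indicated in \cite{Evertse:Ferretti:2013}.

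The first step is to multiply the individual inequalities over all pairs $(v,i)$ with $v \in S$ and $i = 0, \ldots, n$. For any common solution $x \in \PP^n(\KK)$ this yields
$$\prod_{v \in S} \prod_{i=0}^n \frac{|\ell_{iv}(x)|_v}{\max_{0 \leq j \leq n}|x_j|_v} \leq H_{\Osh_{\PP^n_{\KK}}(1)}(x)^{-\sum_{v \in S}\sum_{i=0}^n d_{iv}}.$$
The hypothesis $\sum_{v,i} d_{iv} > n+1$ furnishes some $\epsilon' > 0$ so that the exponent is bounded above by $-(n+1+\epsilon')$. Schmidt's Subspace Theorem, applied to the linearly independent forms $\{\ell_{iv}\}_{i=0}^n$ for each $v \in S$, then ensures that all but finitely many such $x$ lie in a finite union of proper linear subspaces $T_1, \ldots, T_r \subsetneq \PP^n_{\KK}$ defined over $\KK$.

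The remaining task is to collapse this finite union into a single proper subspace $T$. For this I would proceed by induction on $n$, with the base case $n=0$ trivial. First one discards those $T_i$ that contain only finitely many solutions, with these contributing to the exceptional finite set. To each remaining $T_i$, one then applies the inductive hypothesis to the restricted system obtained by pulling back the forms $\ell_{iv}$ to $T_i$; this yields a single proper subspace $T_i' \subsetneq T_i$ outside of which $T_i$ carries only finitely many solutions. Iterating produces a single proper linear subspace $T \subsetneq \PP^n_{\KK}$ for which $\PP^n(\KK) \setminus T$ contains only finitely many solutions to the system.

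The main obstacle is ensuring that the resulting $T$ is \emph{effectively computable} and that the multiplicity-one normalization of the product formula is preserved throughout the reduction. Here the quantitative refinements of Evertse and Ferretti become essential, since they supply explicit upper bounds for the number and heights of the subspaces $T_i$, thereby rendering the inductive step effective. Verifying that the required multiplicity-one condition on the absolute values $|\cdot|_v$ persists under restriction to each $T_i$ is a subtle point to be checked at every stage, and this is where the most care is required.
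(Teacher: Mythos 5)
The paper does not actually prove Theorem \ref{FW:theorem}: it is quoted from \cite{Evertse:Ferretti:2013} as a known result of Faltings and W\"{u}stholz, so there is no internal proof to compare against; your attempt must be judged on its own terms, and it has a genuine gap in the ``collapsing'' step. Multiplying the local inequalities and invoking Schmidt's Subspace Theorem to obtain finitely many proper subspaces $T_1,\dots,T_r$ is a fine start, but the induction you propose to reduce this union to a single $T$ does not go through. After restricting the forms $\ell_{iv}$ to some $T_i \cong \PP^{n_i}$, two hypotheses of the theorem can fail at once: for each $v$ the restricted forms need no longer contain $n_i+1$ linearly independent ones (forms vanishing on $T_i$, or becoming dependent there, carry no information), and the weights that survive need not satisfy $\sum d_{iv} > n_i+1$. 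The subspace on which this weight condition degenerates is precisely the exceptional subspace $T$ of the theorem, so the inductive hypothesis is unavailable exactly where it matters. Worse, if your induction did apply at every stage, each iteration replaces a union of subspaces by a union of strictly smaller ones, and after finitely many steps you would conclude that the system has only finitely many solutions in all of $\PP^n(\KK)$ --- a statement that is false in general, since the theorem explicitly allows infinitely many solutions inside $T$. That your argument would prove too much is the clearest sign the reduction is broken.

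The missing idea is the one this paper is in fact about: the single subspace $T$ is not produced by iterating the subspace theorem, but is read off from the Harder and Narasimhan (maximal destabilizing) filtration of the filtered vector space attached to the data $(\ell_{iv}, d_{iv})$, as in \cite[Section 4]{Faltings:Wustholz} and \cite[Section 15]{Evertse:Ferretti:2013}. This is also the source of the effectivity of $T$; appealing to the quantitative refinements of Evertse and Ferretti to make an invalid induction effective does not address the real mechanism. A correct proof along your general lines would have to be restructured around this semistability argument: identify the destabilizing subspace $T$ from the filtration data, and show that outside $T$ the system behaves semistably, so that only finitely many solutions survive.
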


Finally, we mention our recent result, from \cite{Grieve:Divisorial:Instab:Vojta}, which is at the intersection of $\K$-stability and Diophantine approximation. It establishes, in particular, that the concept of $\K$-instability for Fano varieties has implications for instances of Vojta's Main Conjecture.

\begin{theorem}[{\cite[Theorem 1.1]{Grieve:Divisorial:Instab:Vojta}}]\label{Vojta:canonical:Fano:not:K:stable}
Let $\KK$ be a number field and fix a finite set of places $S$ of $\KK$.  Suppose that $X$ is a $\QQ$-Fano variety with canonical singularities, defined over $\KK$, and which is not $\K$-stable.  Then over $X$ there exists a nonzero, irreducible and reduced effective Cartier divisor $E$, which is defined over some finite extension field $\FF / \KK$, with $\KK \subseteq \FF \subseteq \overline{\KK}$, for which the inequalities predicted by Vojta's Main Conjecture hold true in the following sense.  Let $\mathbb{E}$ be the birational divisor that is determined by $E$.  Let 
$$\mathbb{D} = \mathbb{D}_1+\dots+\mathbb{D}_q$$ 
be a birational divisor over $X$ that has the two properties that: 
\begin{enumerate}
\item[(i)]{
the traces of each of the $\mathbb{D}_i$ are linearly equivalent to the trace of $\mathbb{E}$ on some fixed normal proper model $X'$ of $X$, defined over $\FF$; and
}
\item[(ii)]{
the traces of each of these divisors $\mathbb{D}_i$, for $i = 1,\dots,q$, intersect properly on this model $X'$.
}
\end{enumerate}
Let $B$ be a big line bundle on $X$ and let $\epsilon > 0$.  Then the inequality
\begin{equation}\label{Vojta:Inequality:Eqn:intro:b}
\sum_{v \in S} \lambda_{\mathbb{D},v}(x) + h_{\K_X}(x) \leq \epsilon h_B(x) + \mathrm{O}(1)
\end{equation}
is valid for all $\KK$-rational points $x \in X(\KK) \setminus Z(\KK)$ and $Z \subsetneq X$ some proper Zariski closed subset defined over $\KK$.
\end{theorem}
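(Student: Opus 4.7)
The plan is to translate the hypothesis that $X$ is not $\K$-stable into the existence of a divisorial valuation with non-positive $\beta$-invariant, and then to convert the induced filtration of the anticanonical section ring into local height estimates via Theorem \ref{FW:theorem}.

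First I would invoke the Fujita--Li criterion: a $\QQ$-Fano variety $X$ with canonical singularities fails to be $\K$-stable precisely when there exists a prime divisor $E$ on some birational model $Y \to X$, with log discrepancy $A_X(E)$, such that
\begin{equation*}
\beta_X(E) = A_X(E)\cdot(-\K_X)^n - \int_0^{\tau(E)} \Vol(-\K_X - tE)\, dt \leq 0 \text{.}
\end{equation*}
Since $X$ is defined over $\KK$, the divisor $E$ may be assumed defined over a finite extension $\FF/\KK$; let $\mathbb{E}$ denote the resulting birational divisor and let $X'$ be a fixed normal proper model over $\FF$ on which the relevant traces are realized.

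Next I would use $E$ to define a decreasing filtration of $\H^0(X,-m\K_X)$ by order of vanishing along $E$ on $Y$. The inequality $\beta_X(E) \leq 0$ is the asymptotic statement, as $m \to \infty$, that the average normalized jumping number of this filtration is at least $A_X(E)$. Choosing a basis of $\H^0(X,-m\K_X)$ adapted to the filtration and using it to embed $X$ projectively, I would produce, at each place $v \in S$, linear forms $\ell_{iv}$ in the homogeneous coordinates whose local evaluations compare to the Weil function $\lambda_{\mathbb{E},v}$, together with weights $d_{iv}$ determined by the filtration jumps. Tracking the anticanonical normalization then shows that the $\beta$-inequality translates to the crucial hypothesis $\sum_{v \in S}\sum_i d_{iv} > n+1$ required to apply Theorem \ref{FW:theorem}.

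Theorem \ref{FW:theorem} then supplies a proper linear subspace of the ambient projective space outside of which the Vojta-type inequality \eqref{Vojta:Inequality:Eqn:intro:b} holds for $\mathbb{E}$. Additivity of local Weil functions and heights permits the extension from a single $\mathbb{E}$ to the decomposition $\mathbb{D} = \mathbb{D}_1 + \dots + \mathbb{D}_q$: each trace $\mathbb{D}_i|_{X'}$ is linearly equivalent to $\mathbb{E}|_{X'}$, so the same inequality is obtained for each summand; the proper intersection hypothesis on the $\mathbb{D}_i$ then guarantees that the union of the resulting exceptional Zariski closed subsets pulls back to a proper closed subset $Z \subsetneq X$. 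The principal obstacle, I expect, is the precise asymptotic comparison between the Fujita--Li invariant $\beta_X(E)$ and the weighted Subspace Theorem input $\sum_{v,i}d_{iv}$. This requires an Okounkov-body and Harder--Narasimhan style analysis of the anticanonical graded ring, analogous in spirit to that of \cite{Evertse:Ferretti:2013}, together with uniform control of the error terms as $m \to \infty$, so that the exceptional locus does not degenerate to all of $X$.
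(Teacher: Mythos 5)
First, a point of order: the paper you are working from does not prove this statement at all --- it is quoted verbatim from \cite{Grieve:Divisorial:Instab:Vojta} as motivation in the introduction, so there is no in-paper proof to match. Measured against the actual argument in that reference, your opening move is right: the failure of $\K$-stability is converted, via the Fujita--Li valuative criterion, into a divisorial valuation $E$ over $X$ with $\beta_X(E)\leq 0$, defined over a finite extension $\FF/\KK$. But the Diophantine half of your argument has a genuine gap, and it sits exactly where you wave at ``additivity.'' If you prove \eqref{Vojta:Inequality:Eqn:intro:b} separately for each $\mathbb{D}_i$ and sum over $i=1,\dots,q$, you obtain $\sum_{v\in S}\lambda_{\mathbb{D},v}(x)+q\,h_{\K_X}(x)\leq q\epsilon h_B(x)+\mathrm{O}(1)$, and since $-\K_X$ is ample the discarded term $(q-1)h_{-\K_X}(x)$ grows like a positive multiple of the height and cannot be absorbed into $\epsilon h_B(x)$. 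So the inequality for $\mathbb{D}$ does \emph{not} follow from the inequality for a single $\mathbb{E}$. The hypothesis that the traces of the $\mathbb{D}_i$ intersect properly is not there to control the union of exceptional sets, as you suggest; it is the essential geometric input that lets one bound the \emph{sum} $\sum_i\sum_{v}\lambda_{\mathbb{D}_i,v}$ by a constant that does not degrade with $q$ (at any point only $\dim X$ of the divisors can be close to $x$). The proof in \cite{Grieve:Divisorial:Instab:Vojta} handles this by applying the Ru--Vojta arithmetic general theorem (the birational Nevanlinna constant machinery of \cite{Ru:Vojta:2016}) to all $q$ divisors simultaneously, rather than Theorem \ref{FW:theorem} to one divisor at a time; the filtration-by-order-of-vanishing bookkeeping you describe is indeed how that theorem is proved, but it must be run for the whole properly-intersecting collection at once.

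A second, smaller omission: you never use the hypothesis that $X$ has canonical singularities. It is not decorative --- it gives $A_X(E)\geq 1$, which combined with $\beta_X(E)\leq 0$ yields $S(E)\geq A_X(E)\geq 1$, and it is the bound $1/S(E)\leq 1$ that makes the coefficient of $h_{-\K_X}$ in the Ru--Vojta inequality equal to $1+\epsilon$ rather than something larger; without it the term $h_{\K_X}(x)$ cannot be moved to the left-hand side of \eqref{Vojta:Inequality:Eqn:intro:b}. Your sentence asserting that $\beta_X(E)\leq 0$ ``translates to'' the Faltings--W\"ustholz hypothesis $\sum_{v\in S}\sum_i d_{iv}>n+1$ is precisely where this numerology must be carried out, and as written it is asserted rather than proved.
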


In \eqref{Vojta:Inequality:Eqn:intro:b}, $\lambda_{\mathbb{D},v}(\cdot)$ is the \emph{birational Weil function} of $\mathbb{D}$ with respect to the place $v \in S$.  (We refer to \cite[Section 4]{Ru:Vojta:2016} and \cite[Section 3]{Grieve:Divisorial:Instab:Vojta} for more details.)
 
Here, our main result builds on the techniques from \cite{Codogni:Patakfalvi:2021} and establishes, in particular, a Central Limit Theorem for the vertices of the Harder-Narasimhan polygons.  (See Theorem \ref{main:theorem:intro} and Sections \ref{HN:polygons:and:vertices} and \ref{Main:Theorem:Proof} for precise details.)

To place matters into perspective, note that an essential feature to the work of Faltings and W\"{u}stholz, \cite{Faltings:Wustholz}, is their theory of \emph{slope semi-stability for filtered vector spaces}.  Over the years, this theory has been developed and has produced significant applications.  (See, for example, \cite{Faltings:1994:ICM}, \cite{Totaro:Tensor:prod}, \cite{Fujimori:2003}, \cite{Chen:2010:b} and the references therein.)  

Our main results here,  Theorems \ref{main:theorem:intro} and \ref{filt:HN:tensor:cor} below, are natural extensions to this circle of ideas.  Aside from being of an intrinsic interest in their own right, they provide continued evidence for the existence of fruitful, and yet to be discovered, interactions amongst the areas of K-stability, positivity questions for polarized projective varieties and Diophantine arithmetic geometry.

Let us now formulate Theorem \ref{main:theorem:intro}.  In Section \ref{Main:Theorem:Proof}, see Theorem \ref{HN:vertex:CLT}, we state a slight variant, which is phrased in terms of our concept of \emph{Harder and Narasimhan data}.  In our formulation of Theorem \ref{main:theorem:intro}, we define this concept in passing.

\begin{theorem}\label{main:theorem:intro}
Fix a \emph{rank vector} 
$\overrightarrow{r} := (r_1,\dots,r_\ell)\text{,}$
that consists of positive integers $r_i$, for $i = 1,\dots,\ell$, 
fix a collection of strictly decreasing collection of rational numbers
$\mu_1 > \dots > \mu_\ell$ 
and let 
$\overrightarrow{d} := (d_1,\dots,d_\ell)$ 
be the resulting \emph{degree vector}, which is determined by the condition that
$d_i := \mu_i r_i \text{,}$
for $i = 1,\dots,\ell$.      Assume that such \emph{Harder and Narasimhan data} is  \emph{positive} in the sense that 
$$\left|\overrightarrow{d}\right| := \sum_{i=1}^\ell d_i > 0 \text{.} $$  
Let 
$p_i := r_i / r\text{,}$
for $i = 1,\dots,\ell$, and let 
$\mathcal{Y} := \mathcal{Y}\left([\ell],p_1,\dots,p_{\ell}\right)$ 
be the discrete probability  space on the set 
$[\ell] := \{1,\dots,\ell \}$ 
and having probability measures $p_1,\dots,p_\ell$.  Let 
$Y_j := Y_j(\overrightarrow{\mu})$ 
be a sequence of independent, identically distributed random variables of $\mathcal{Y}$ that take value 
$\mu_i := d_i / r_i$ 
on $i$.  
 Then, within this context, given a nonnegative integer $z \geq 0$, it holds true that
$$\lim_{m \to \infty} \operatorname{Prob}\left( \sum_{j=1}^m Y_j \geq z \right) = 1 \text{.}$$
\end{theorem}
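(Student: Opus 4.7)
The plan is to reduce the statement directly to the Weak Law of Large Numbers by making the positivity hypothesis on the Harder-Narasimhan data translate into positivity of $\mathbb{E}[Y_1]$.

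First, I would compute the expectation of a single copy of the random variable:
$$\mathbb{E}[Y_1] \;=\; \sum_{i=1}^\ell p_i\, \mu_i \;=\; \sum_{i=1}^\ell \frac{r_i}{r}\cdot\frac{d_i}{r_i} \;=\; \frac{\left|\overrightarrow{d}\right|}{r},$$
where $r := \sum_{i=1}^\ell r_i$. By the positivity assumption $\left|\overrightarrow{d}\right| > 0$ on the Harder-Narasimhan data, the quantity $\mu := \left|\overrightarrow{d}\right|/r$ is a strictly positive rational number. Each $Y_j$ takes values in the finite set $\{\mu_1,\dots,\mu_\ell\}$, so the sequence is bounded and has finite variance, and the Weak Law of Large Numbers applies to give, for every $\epsilon > 0$,
$$\lim_{m \to \infty} \operatorname{Prob}\!\left(\left|\frac{1}{m}\sum_{j=1}^m Y_j - \mu\right| < \epsilon\right) \;=\; 1.$$

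Specializing to $\epsilon = \mu/2$, the event on the left forces $\sum_{j=1}^m Y_j > m\mu/2$. For a fixed integer $z \geq 0$, one then chooses $m_0$ so that $m_0 \mu/2 \geq z$, and concludes that for every $m \geq m_0$ the event $\{\sum_{j=1}^m Y_j \geq z\}$ contains the above Law of Large Numbers event, whence its probability tends to $1$. The main point requiring care is the expectation computation itself: it is exactly the telescoping $p_i \mu_i = d_i/r$ that converts the arithmetic positivity of the degree vector $\overrightarrow{d}$ into the probabilistic positivity needed to invoke the Law of Large Numbers. Beyond this conversion, no substantive obstacle arises, and one could equally well deduce the conclusion from the Strong Law, Chebyshev's inequality applied directly to the bounded variables $Y_j - \mu$, or from the Central Limit Theorem that organizes the rest of the paper.
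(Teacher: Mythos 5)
Your proposal is correct, but it takes a more elementary route than the paper. The paper's proof invokes the Central Limit Theorem: it shows that $(Z_m - m\mu)/\sqrt{m}$ converges in distribution to a normal variable $\Phi$, deduces for each fixed real $y$ that the event $\{(Z_m - m\mu)/\sqrt{m} \geq y\}$ is eventually contained in $\{Z_m \geq z\}$ (using $\mu > 0$ so that $y\sqrt{m} + m\mu \geq z$ for large $m$), concludes $\liminf_m \operatorname{Prob}(Z_m \geq z) \geq \operatorname{Prob}(\Phi \geq y)$, and finally lets $y \to -\infty$. You instead compute $\mathbb{E}[Y_1] = \sum_i p_i\mu_i = |\overrightarrow{d}|/r > 0$ --- the same pivotal identity the paper uses to translate positivity of the degree vector into positivity of the mean --- and then apply the Weak Law of Large Numbers with $\epsilon = \mu/2$ to trap $\sum_j Y_j$ above $m\mu/2 \geq z$ for $m$ large. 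Your argument is shorter, avoids the $\liminf$ and the limit in $y$, and needs only boundedness of the $Y_j$ (so Chebyshev suffices); the paper itself acknowledges in a remark following its proof that Chebyshev's inequality could replace the CLT, so your route is one the author explicitly sanctions. What the CLT approach buys in exchange is finer information --- the Gaussian fluctuation of the partial sums around $m\mu$ at scale $\sqrt{m}$ --- which is relevant to the paper's stated interest in the rate at which the limit is achieved, though it is not needed for the bare statement of the theorem.
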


We prove Theorem \ref{main:theorem:intro} in Section \ref{Main:Theorem:Proof}.  
As one application, it implies a filtered vector space analogue of \cite[Theorem 5.11]{Codogni:Patakfalvi:2021}.  This is the content of Theorem \ref{filt:HN:tensor:cor}, whose formulation is more technical.  

To get a flavour, consider a \emph{filtered vector space}
\begin{equation}\label{filt:vp:space:eqn1}
V = F^{\lambda_0}V \supsetneq F^{\lambda_1}V \supsetneq \dots \supsetneq F^{\lambda_n}V \supsetneq F^{\lambda_{n+1}} V = 0 \text{.}
\end{equation}
Then, by the theory of Faltings and W\"{u}stholz, \cite[Section 4]{Faltings:Wustholz},  
it admits a \emph{canonical Harder and Narasimhan filtration}
\begin{equation}\label{filt:vp:space:eqn2}
0 = V_0 \subsetneq V_1 \subsetneq \dots \subsetneq V_\ell = V \text{.}
\end{equation}
  The \emph{Harder and Narasimhan data}, $\operatorname{HN}(\overrightarrow{\mu},\overrightarrow{r})$, is obtained from the Harder and Narasimhan filtration \eqref{filt:vp:space:eqn2}.  It consists  of the \emph{slope} and \emph{rank} vectors.  We refer to Example \ref{filt:v:sp:example} for further details.

To get a sense for what we mean by \emph{positive} Harder and Narasimhan data,
respectively, denote the \emph{slope}, \emph{rank} and \emph{degree vectors} as
$$
\overrightarrow{\mu} := (\mu_1,\dots,\mu_\ell) \text{, } \overrightarrow{r} := (r_1,\dots,r_\ell) \text{ and } \overrightarrow{d} := (d_1,\dots,d_\ell) \text{.}
$$
Here
$$
\mu_i := \mu(V_i / V_{i-1}) \text{, } 
r_i := \dim (V_i/V_{i-1}) 
\text{ and }
d_i := r_i \mu_i \text{,}
$$
for $i = 1,\dots, \ell$.  The Harder and Narasimhan data $\operatorname{HN}(\overrightarrow{\mu},\overrightarrow{r})$ is called \emph{positive} if
$$
\left|\overrightarrow{d}\right| := \sum_{i=1}^{\ell} d_i > 0 \text{.}
$$

The intuition for Theorem \ref{filt:HN:tensor:cor}, is that, fixing $m \geq 0$, we want to study, inside of $V^{\otimes m}$, the collection of those subspaces that are spanned by $m$-fold tensor products of the \emph{Harder and Narasimhan subspaces} $V_i$, which appear in the Harder and Narasimhan filtration \eqref{filt:vp:space:eqn2}.

In more detail, for each 
$$
\mathbf{a} = (a_1,\dots,a_m) \in [\ell]^m := \{1,\dots,\ell\}^m \text{,}
$$
set
$$
v_{\overrightarrow{\mu}}(\mathbf{a}) := \sum_{j=1}^m \mu_{a_j} \text{,}
$$
and fixing a nonnegative integer $z \geq 0$, set 
$$
S_{m,z} (\overrightarrow{\mu}):= \left\{ \mathbf{a} = (a_1,\dots,a_m) \in [\ell]^m : \sum_{j=1}^m \mu_{a_j} \geq z \right\} \text{.}
$$
Finally, writing the elements of $[\ell]^m$ in decreasing order with respect to $v_{\overrightarrow{\mu}}(\cdot)$, denoted as
$$
[\ell]^m := \{\mathbf{a}_1 = (a_{11},\dots,a_{m1}),\dots,\mathbf{a}_{\ell m} = (a_{1 \ell m},\dots,a_{m \ell m}) \}\text{,}
$$
put 
\begin{equation}\label{HN:subspace}
H^{\# S_{m,z}(\overrightarrow{\mu})} := \sum_{i=1}^{\# S_{m,z}(\overrightarrow{\mu})} \bigotimes_{j=1}^m V_{a_{ji}} \text{.}
\end{equation}

Having fixed some notation and context, our filtered vector space analogue of \cite[Theorem 5.11]{Codogni:Patakfalvi:2021} is formulated in the following way.  It is an application of Theorem \ref{main:theorem:intro} and is proved in Section \ref{filt:HN:tensor:cor:Proof}.

\begin{theorem}\label{filt:HN:tensor:cor}
Suppose that a filtered vector space $V$ has positive Harder and Narasimhan data $\operatorname{HN}(\overrightarrow{\mu},\overrightarrow{r})$.  Fixing a nonnegative integer $z \geq 0$, for each nonnegative integer $m \geq 0$, let $H^{\# S_{m,z}(\overrightarrow{\mu})}$ be the subspace of $V^{\otimes m}$ that is given by \eqref{HN:subspace}.  Then, with this notation and hypothesis, it holds true that
$$
\lim_{m \to \infty} \frac{\dim H^{\# S_{m,z} (\overrightarrow{\mu})} }{ \dim V^{\otimes m} } = 1 \text{.}
$$
\end{theorem}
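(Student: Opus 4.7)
The plan is to reinterpret the dimension ratio in question as precisely the probability $\operatorname{Prob}\bigl(\sum_{j=1}^m Y_j \geq z\bigr)$ appearing in Theorem \ref{main:theorem:intro}, and then to invoke that theorem. Because we are working with a filtration of vector spaces (not a more rigid context), I am free to split \eqref{filt:vp:space:eqn2} and write $V = \bigoplus_{i=1}^{\ell} W_i$ with $\dim W_i = r_i$ and $V_k = \bigoplus_{i \leq k} W_i$. This yields
$$V^{\otimes m} = \bigoplus_{\mathbf{a} \in [\ell]^m} W_{a_1} \otimes \cdots \otimes W_{a_m}, \qquad V_{a_1} \otimes \cdots \otimes V_{a_m} = \bigoplus_{\mathbf{b} \leq \mathbf{a}} W_{b_1} \otimes \cdots \otimes W_{b_m},$$
where $\leq$ denotes the componentwise partial order on $[\ell]^m$.

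The next and central step is to identify $H^{\# S_{m,z}(\overrightarrow{\mu})}$ with the direct summand $\bigoplus_{\mathbf{b} \in S_{m,z}(\overrightarrow{\mu})} W_{b_1} \otimes \cdots \otimes W_{b_m}$ of $V^{\otimes m}$. By the ordering convention, the top $\# S_{m,z}(\overrightarrow{\mu})$ elements $\mathbf{a}_1,\dots,\mathbf{a}_{\# S_{m,z}(\overrightarrow{\mu})}$ are, independently of how ties are broken, exactly the elements of $S_{m,z}(\overrightarrow{\mu})$, so
$$H^{\# S_{m,z}(\overrightarrow{\mu})} = \sum_{\mathbf{a} \in S_{m,z}(\overrightarrow{\mu})} V_{a_1} \otimes \cdots \otimes V_{a_m} = \bigoplus_{\mathbf{b} \in T} W_{b_1} \otimes \cdots \otimes W_{b_m},$$
where $T = \{\mathbf{b} \in [\ell]^m : \mathbf{b} \leq \mathbf{a} \text{ for some } \mathbf{a} \in S_{m,z}(\overrightarrow{\mu}) \}$. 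The one combinatorial point requiring care, and where the strict monotonicity $\mu_1 > \cdots > \mu_\ell$ is used, is the equality $T = S_{m,z}(\overrightarrow{\mu})$: if $\mathbf{b} \leq \mathbf{a}$ componentwise then $\mu_{b_j} \geq \mu_{a_j}$ for each $j$, hence $v_{\overrightarrow{\mu}}(\mathbf{b}) \geq v_{\overrightarrow{\mu}}(\mathbf{a}) \geq z$; the reverse inclusion is trivial upon taking $\mathbf{a} = \mathbf{b}$.

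With this identification secured, computing dimensions and setting $p_i := r_i / r$ with $r := \sum_{i=1}^{\ell} r_i = \dim V$ gives
$$\frac{\dim H^{\# S_{m,z}(\overrightarrow{\mu})}}{\dim V^{\otimes m}} = \frac{\sum_{\mathbf{b} \in S_{m,z}(\overrightarrow{\mu})} r_{b_1} \cdots r_{b_m}}{r^m} = \sum_{\mathbf{b} \in S_{m,z}(\overrightarrow{\mu})} p_{b_1} \cdots p_{b_m}.$$
In the language of Theorem \ref{main:theorem:intro}, the right-hand side is exactly $\operatorname{Prob}\bigl(\sum_{j=1}^m Y_j \geq z\bigr)$, since by definition $S_{m,z}(\overrightarrow{\mu})$ is the event $\bigl\{\sum_{j=1}^m \mu_{B_j} \geq z\bigr\}$ for the independent trials $B_j$ on $[\ell]$ with law $(p_1,\dots,p_\ell)$. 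Applying Theorem \ref{main:theorem:intro} to the positive Harder and Narasimhan data $\operatorname{HN}(\overrightarrow{\mu},\overrightarrow{r})$ then forces the limit to be $1$. The main obstacle is not analytic but combinatorial, namely the translation of the subspace $H^{\# S_{m,z}(\overrightarrow{\mu})}$ into this probabilistic event, which is what the identification $T = S_{m,z}(\overrightarrow{\mu})$ above accomplishes.
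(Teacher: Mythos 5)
Your proof is correct, and it reaches the key dimension identity $\dim H^{\# S_{m,z}(\overrightarrow{\mu})} = \sum_{\mathbf{a} \in S_{m,z}(\overrightarrow{\mu})} r_{a_1}\cdots r_{a_m}$ by a genuinely different device than the paper. You choose a splitting $V = \bigoplus_{i} W_i$ compatible with the Harder and Narasimhan flag, so that $V^{\otimes m}$ decomposes as $\bigoplus_{\mathbf{b}} W_{b_1}\otimes\cdots\otimes W_{b_m}$ and each $V_{a_1}\otimes\cdots\otimes V_{a_m}$ is the partial sum over componentwise-smaller indices; the identification of $H^{\# S_{m,z}(\overrightarrow{\mu})}$ with the summands indexed by $S_{m,z}(\overrightarrow{\mu})$ then rests only on the downward-closedness of $S_{m,z}(\overrightarrow{\mu})$ under the componentwise order, which you correctly derive from the strict decrease of the slopes, together with the observation that the first $\# S_{m,z}(\overrightarrow{\mu})$ elements in the $v_{\overrightarrow{\mu}}$-decreasing ordering are exactly $S_{m,z}(\overrightarrow{\mu})$. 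The paper instead avoids any choice of splitting: it sets $F^i := \bigotimes_{j} V_{a_{ji}}$ and $H^i := \sum_{j \leq i} F^j$, produces natural surjections from the tensor products $G^i$ of the graded pieces onto $H^i/H^{i-1}$, and forces these to be isomorphisms by the global dimension count $\sum_i \dim G^i = \left(\sum_i r_i\right)^m = \dim V^{\otimes m}$. Your argument is more elementary and entirely legitimate for vector spaces, where complements always exist; the paper's graded-quotient argument is the one that transfers to the vector-bundle setting of Codogni and Patakfalvi, where no such splitting is available, and it also keeps track of the slopes of the subquotients $H^i/H^{i-1}$, which your direct-sum bookkeeping discards. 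From the dimension identity onward the two arguments coincide: both divide by $r^m$, recognize the resulting sum as $\operatorname{Prob}\left(\sum_{j=1}^m Y_j \geq z\right)$, and invoke Theorem \ref{main:theorem:intro}.
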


As some additional context and motivation for our abstract formulation of \cite[Theorem 5.11]{Codogni:Patakfalvi:2021}, see Theorems \ref{main:theorem:intro} and \ref{filt:HN:tensor:cor}, we mention that a concept of \emph{Harder and Narasimhan Polygons} emerged as a tool for expanding upon Harder and Narasimhan's theory of filtrations for vector bundles on curves. This was popularized by Grayson in his work on lattice reduction theory \cite{Grayson:1984}.  We refer to \cite{Casselman:2004} for an exposition.   Here, our point of departure is to associate a polygon to \emph{Harder and Narasimhan data}.   This is made precise in Section \ref{HN:polygons:and:vertices}.

Our results also give impetus for further investigation.  As some examples, the concept of Harder and Narasimhan data, see Definitions \ref{HN:defns}, raise the question of explicit and robust construction thereof, and especially with applications in geometric and arithmetic contexts.  

In the direction of  
Theorem \ref{main:theorem:intro}, there is the question of the rate in which the limit that appears in its conclusion is actually achieved.  Further, it remains interesting to understand the extent to which Theorems \ref{main:theorem:intro} and \ref{filt:HN:tensor:cor}, or variants thereof, will have applications for geometric and arithmetic aspects of filtered linear series.  Such results would complement those of our recent work \cite{Grieve:MVT:2019} (among others).

In Section \ref{Probability:theory}, we discuss the Laws of Large Numbers and state the Central Limit Theorem for independent and identically distributed random variables.  In Section \ref{Harder:Narasimhan:Main:Result}, we recall the classical theory of Harder and Narasimhan and state the main result from \cite{Harder:Narasimhan:1974}.  (See Theorem \ref{HN:Existence:Thm}.)  In Section \ref{tensor:product:construction}, we discuss a key motivational example from \cite{Codogni:Patakfalvi:2021}.  It also provides additional context and motivation for Theorem \ref{filt:HN:tensor:cor}.   Again, the concept of Harder and Narasimhan data is made precise in Section \ref{HN:polygons:and:vertices}.  Respectively, Theorems \ref{main:theorem:intro} and \ref{filt:HN:tensor:cor} are established in Sections \ref{Main:Theorem:Proof} and \ref{filt:HN:tensor:cor:Proof}.

Throughout this article, unless explicitly stated otherwise, all schemes, stacks and vector spaces are defined over a fixed algebraically closed characteristic zero base field $\kk$.  

\subsection*{Acknowledgements}

This work benefited from a visit to the American Institute of Mathematics during January 2020.  I thank Zsolt Patakfalvi for his lecture.  Moreover, I thank colleagues for conversations on related topics and for their insightful virtual Zoom lectures.  I also  thank an anonymous referee for their interest, careful reading of this work and for providing helpful thoughtful comments.
Finally, I thank the Natural Sciences and Engineering Research Council of Canada for their support through my grants DGECR-2021-00218 and RGPIN-2021-03821.

\section{Laws of Large Numbers}\label{Probability:theory}

In this section we state the three main Laws of Large Numbers.  Our approach mostly follows \cite{Billingsley:1995}, \cite{Durrett:2019} and \cite{Murty:2020}.

First of all, in heuristic terms, the \emph{Weak Law of Large Numbers} says that if $n$ is large, then there is only a small chance that the fraction of heads in $n$ tosses will be far from $1/2$.  In more precise terms, it is formulated in the following way.

\begin{theorem}[Weak Law of Large Numbers, cf. {\cite[p. 86]{Billingsley:1995}}, {\cite[Theorem 2.2.12]{Durrett:2019}}]\label{Weak:Law:Large:Numbers}
Let $X_1, X_2,\dots$ be a sequence of independent identically distributed simple random variables with identical expected values 
$$E(X_n) = \mu\text{,}$$ for each $n$.  Put 
$$S_n = X_1 + \hdots + X_n\text{.}$$  Suppose that $y > 0$.  Then
$$
\lim_{n \to \infty} \operatorname{Prob}\left( \left| n^{-1} S_n - \mu \right| \geq y  \right) = 0 \text{.}
$$
\end{theorem}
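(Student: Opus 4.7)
The plan is to give the classical Chebyshev-style proof, which works cleanly because the hypothesis assumes the $X_i$ are \emph{simple} random variables. By definition a simple random variable takes only finitely many values, and so each $X_i$ is bounded; in particular each $X_i$ has a finite second moment. Since the $X_i$ are also identically distributed, they share a common (finite) variance $\sigma^2 := \operatorname{Var}(X_1)$.

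First I would record the two elementary moment identities for $S_n = X_1 + \dots + X_n$. By linearity of expectation, $E(S_n) = n \mu$, so $E(n^{-1} S_n) = \mu$. By independence (which turns covariances into zero), $\operatorname{Var}(S_n) = n \sigma^2$, whence $\operatorname{Var}(n^{-1} S_n) = \sigma^2 / n$. Both computations are standard and I would simply cite, e.g., \cite{Billingsley:1995} for them.

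Next I would invoke Chebyshev's inequality applied to the random variable $n^{-1} S_n$: for any $y > 0$,
$$
\operatorname{Prob}\bigl( | n^{-1} S_n - \mu | \geq y \bigr) \;\leq\; \frac{\operatorname{Var}(n^{-1} S_n)}{y^2} \;=\; \frac{\sigma^2}{n y^2}.
$$
Letting $n \to \infty$ with $y$ and $\sigma^2$ fixed makes the right-hand side tend to $0$, which is exactly the conclusion of the theorem.

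There is no real obstacle here: the simplicity hypothesis on the $X_i$ rules out the only subtlety (possible infinite variance) that would otherwise require the more delicate truncation argument of \cite[Theorem 2.2.12]{Durrett:2019}. The main thing to be careful about is to justify the independence step in the variance computation and to note that, although one could weaken the hypothesis to $L^1$ random variables with a truncation argument, the statement as given requires nothing beyond Chebyshev.
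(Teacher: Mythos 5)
Your proof is correct and is exactly the standard Chebyshev argument that the paper's cited sources (Billingsley, p.~86) use for this statement, which the paper itself quotes without reproving; the simplicity hypothesis indeed guarantees the finite variance needed, and the degenerate case $\sigma^2 = 0$ causes no trouble since the bound is then $0$. Nothing further is needed.
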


Turning to the \emph{Strong Law of Large Numbers}, recall that it expands upon the Weak Law (Theorem \ref{Weak:Law:Large:Numbers}).

\begin{theorem}[Strong Law of Large Numbers, cf. {\cite[Theorem 6.1]{Billingsley:1995}}, {\cite[Theorem 2.4.1]{Durrett:2019}}]
Let $X_1, X_2,\dots$ be a sequence of independent identically distributed simple random variables with identical expected values 
$$E(X_n) = \mu\text{,}$$ 
for each $n$.  Put 
$$S_n = X_1 + \hdots + X_n\text{.}$$  
It then holds true that
$$
\operatorname{Prob}\left(\lim_{n \to \infty} n^{-1} S_n = \mu \right) = 1 \text{.}
$$
\end{theorem}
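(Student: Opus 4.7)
The plan is to give the classical fourth-moment proof, which is the cleanest approach since the hypothesis restricts us to simple random variables (hence bounded, with all moments finite). By replacing $X_n$ with $X_n - \mu$ we may assume, without loss of generality, that $\mu = 0$, so that the goal is to show $n^{-1} S_n \to 0$ almost surely.

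First I would compute (or bound) $E[S_n^4]$. Expanding the power, one has
$$
E[S_n^4] = \sum_{i,j,k,\ell} E[X_i X_j X_k X_\ell],
$$
and by independence together with $E[X_i] = 0$, any term in which some index appears exactly once vanishes. The only surviving contributions come from tuples whose multi-set of indices has all multiplicities $\geq 2$: the $n$ terms with $i=j=k=\ell$, contributing $n\, E[X_1^4]$, and the $3n(n-1)$ terms where the indices split as a pair and a pair, contributing $3n(n-1)(E[X_1^2])^2$. Since the $X_i$ are simple (hence bounded), these moments are finite, yielding $E[S_n^4] \leq C n^2$ for some constant $C$ depending only on the distribution.

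Next I would apply Markov's inequality in the form
$$
\operatorname{Prob}\!\left(\left|n^{-1} S_n\right| \geq \epsilon\right)
= \operatorname{Prob}(|S_n|^4 \geq n^4 \epsilon^4)
\leq \frac{E[S_n^4]}{n^4 \epsilon^4}
\leq \frac{C}{n^2 \epsilon^4}.
$$
Because $\sum_n n^{-2} < \infty$, the first Borel--Cantelli lemma gives
$$
\operatorname{Prob}\!\left(\left|n^{-1}S_n\right| \geq \epsilon \text{ for infinitely many } n\right) = 0
$$
for each fixed $\epsilon > 0$. Intersecting over the countable sequence $\epsilon = 1/k$, $k \in \NN$, one obtains a single event of full probability on which $\limsup_{n \to \infty} |n^{-1} S_n| \leq 1/k$ for every $k$, hence $n^{-1}S_n \to 0$ almost surely, which is the desired conclusion.

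The main obstacle in this sketch is really the fourth-moment computation, since one has to keep track of which index patterns survive the vanishing of $E[X_i]$; everything else is a routine application of Markov and Borel--Cantelli. If one did not have simple random variables to begin with, the main obstacle would instead be avoiding the finiteness of higher moments, and one would have to replace the fourth-moment argument with Kolmogorov's truncation method: set $Y_n := X_n \mathbf{1}_{|X_n| \leq n}$, verify $\sum_n \operatorname{Var}(Y_n)/n^2 < \infty$ using only $E|X_1| < \infty$, deduce a.s. convergence of $\sum_n (Y_n - E[Y_n])/n$ from Kolmogorov's inequality, invoke Kronecker's lemma to pass from this series to the Ces\`aro average, and finally use Borel--Cantelli on $\sum_n \operatorname{Prob}(|X_n| > n) \leq E|X_1|$ to see that $X_n = Y_n$ eventually a.s. Under the stated hypothesis of simple random variables, however, the fourth-moment route is shorter and sufficient.
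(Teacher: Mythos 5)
Your proof is correct. Note that the paper does not actually prove this theorem---it is stated as background and deferred entirely to the cited references (Billingsley, Theorem 6.1; Durrett, Theorem 2.4.1)---so there is no in-paper argument to compare against. Your fourth-moment route is the standard self-contained proof in the bounded/simple setting: the combinatorial count $n\,E[X_1^4] + 3n(n-1)\bigl(E[X_1^2]\bigr)^2$ is right (the $3n(n-1)$ comes from $6\binom{n}{2}$ arrangements of the pair--pair patterns), the Markov--Borel--Cantelli step is routine, and intersecting over $\epsilon = 1/k$ correctly upgrades the conclusion to a single almost-sure event. Your closing remark that the truncation/Kolmogorov-inequality machinery is only needed absent higher moments is also accurate; for simple random variables the fourth-moment argument suffices, and it is in fact the argument one would expect a reader of this paper to supply.
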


Finally, we state  the Central Limit Theorem.  It also extends the Weak Law of Large Numbers.  An attractive self contained proof via the theory of Fourier transforms is given in \cite[Section 3.15]{Murty:2020}.

\begin{theorem}[Central Limit Theorem, cf. {\cite[Theorem 27.1]{Billingsley:1995}}, {\cite[
Theorem 3.4.1]{Durrett:2019}}, {\cite[Theorem 3.16]{Murty:2020}}]\label{Central:Limit:Theorem}
Let $\Phi$ be a standard normal random variable and so, in particular, having probability density function
$$
\phi(y) = \frac{1}{\sqrt{2 \pi}} e^{- \frac{y^2}{2} } \text{.}
$$
Let $X_1, X_2,\dots$ be a sequence of independent random variables having the same distribution with mean $\mu$ and positive finite variance $\sigma^2$.  Let 
$$S_n = X_1 + \hdots + X_n\text{.}$$  
It then holds true that
$$
\lim_{n \to \infty} \frac{S_n - n \mu}{\sigma \sqrt{n}} \to \Phi \text{.}
$$
Further
$$
\lim_{n \to \infty} n^{-1} S_n \to \mu \text{.}
$$
\end{theorem}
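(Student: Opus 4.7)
The plan is to prove the Central Limit Theorem via the method of characteristic functions, which is the Fourier-analytic route alluded to by the reference to \cite[Section 3.15]{Murty:2020}. First I would reduce to the standardized setting: replacing each $X_i$ with $(X_i - \mu)/\sigma$, one may assume without loss that $\mu = 0$ and $\sigma^2 = 1$, so that the random variable under study becomes $S_n/\sqrt{n}$. For a real random variable $Y$, let $\phi_Y(t) := E[e^{itY}]$ denote its characteristic function. The key structural ingredients are: independence yields $\phi_{S_n}(t) = \phi_X(t)^n$, where $X$ denotes a generic term of the i.i.d.\ sequence; affine scaling gives $\phi_{S_n/\sqrt{n}}(t) = \phi_X(t/\sqrt{n})^n$; and finiteness of the second moment produces a second-order expansion
\[
\phi_X(t) = 1 + i t \, E[X] - \tfrac{t^2}{2} E[X^2] + o(t^2) = 1 - \tfrac{t^2}{2} + o(t^2)
\]
near the origin, after the standardization.

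Assembling these pieces, for each fixed $t \in \RR$ one computes
\[
\phi_{S_n/\sqrt{n}}(t) = \Bigl( 1 - \tfrac{t^2}{2n} + o(1/n) \Bigr)^n \longrightarrow e^{-t^2/2} \quad \text{as } n \to \infty,
\]
via the elementary identity $(1 + c_n/n)^n \to e^c$ whenever $c_n \to c$. Since $e^{-t^2/2}$ is the characteristic function of a standard normal $\Phi$, L\'{e}vy's Continuity Theorem promotes this pointwise convergence of characteristic functions to convergence in distribution of $S_n/\sqrt{n}$ to $\Phi$. The accompanying assertion $n^{-1} S_n \to \mu$ is then the Weak Law of Large Numbers (Theorem \ref{Weak:Law:Large:Numbers}) applied to the original, un-standardized sequence, noting that the uniformly bounded variance $\sigma^2$ ensures the hypothesis of that theorem is met.

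The main obstacle I foresee lies in the rigorous Taylor expansion of $\phi_X$ at the origin under the mere hypothesis $E[X^2] < \infty$. One cannot simply differentiate the defining integral twice, since the formal integrand $-y^2 e^{ity}$ controlling $\phi_X''(t)$ need not be dominated by an integrable function uniformly in $t$; instead, one must exploit the sharp truncation bound $\bigl|e^{iy} - 1 - iy - \tfrac{(iy)^2}{2}\bigr| \leq \min\bigl\{|y|^3/6,\, y^2\bigr\}$ together with dominated convergence, which is the delicate analytic step. A secondary, but essentially automatic, point is verifying that the limit function $e^{-t^2/2}$ is continuous at $0$, so that L\'{e}vy's theorem indeed applies and no mass escapes to infinity in the limiting distribution.
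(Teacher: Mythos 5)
Your proposal is correct and follows precisely the Fourier-analytic (characteristic function) route that the paper itself points to by citing \cite[Section 3.15]{Murty:2020}; the paper offers no independent proof of this classical result, only the references. The standardization, the second-order expansion controlled by the truncation bound $\bigl|e^{iy}-1-iy-\tfrac{(iy)^2}{2}\bigr|\leq\min\{|y|^3/6,\,y^2\}$, and the appeal to L\'{e}vy's Continuity Theorem together with the Weak Law for the final assertion constitute the standard complete argument.
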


\section{The theory of Harder and Narasimhan}\label{Harder:Narasimhan:Main:Result}

Here, we recall, from \cite{Harder:Narasimhan:1974}, the theory of Harder and Narasimhan.
In what follows, $C$ denotes a non-singular projective algebraic curve over an  arbitrary characteristic algebraically closed base field.  By abuse of terminology, we fail to distinguish amongst the concept of finite rank locally free $\Osh_C$-modules and total spaces of vector bundles.  On the other hand, if $\mathcal{E}$ is a vector bundle on $C$ and $\mathcal{F}$ a locally free submodule, then $\mathcal{F}$ is called a subvector bundle if the quotient $\mathcal{E} / \mathcal{F}$ is locally free.

If $\mathcal{E}$ is a nonzero vector bundle on $C$, then its \emph{slope} is
$$
\mu(\mathcal{E}) := \frac{\operatorname{deg}\left(\mathcal{E} \right)}{\operatorname{rank}\left(\mathcal{E}\right)} = \frac{ \operatorname{deg}\left( \operatorname{det}\left(\mathcal{E}\right)\right)}{r} \text{.}
$$
Here 
$$r := \operatorname{rank}\left(\mathcal{E}\right)$$ 
is the \emph{rank} of $\mathcal{E}$ and 
$$
\operatorname{det}\left(\mathcal{E}\right) := \bigwedge^r\left(\mathcal{E}\right)
$$ 
is the \emph{determinant line bundle}.

If a nonzero vector bundle $\mathcal{E}$ on $C$ has the property that
$$
\mu(\mathcal{F}) \leq \mu(\mathcal{E}) \text{,}
$$
for all nonzero proper subbundles $\mathcal{F}$, then it is called \emph{semistable}.  By \cite[Lemma 1.3.7]{Harder:Narasimhan:1974}, if a nonzero vector bundle $\mathcal{E}$ is not semistable, then it admits a unique non-zero subbundle $\mathcal{F}$ which \emph{stongly contradicts semistability} in the following sense
\begin{enumerate}
\item[(i)]{$\mathcal{F}$ is semi-stable; and}
\item[(ii)]{if $\mathcal{G}$ is a subbundle of $\mathcal{E}$ which contains $\mathcal{F}$ as a subbundle, then 
$$\mu(\mathcal{F}) > \mu(\mathcal{G})\text{.}$$}
\end{enumerate}

The concept of subbundles which strongly contradict semistability is a key technical point in establishing the existence and uniqueness of the Harder and Narasimhan (canonical) filtrations.  This fundamental result of \cite{Harder:Narasimhan:1974} is formulated in the following way.

\begin{theorem}[{\cite[Section 1.3]{Harder:Narasimhan:1974}}] \label{HN:Existence:Thm} 
 Let $C$ denote a non-singular projective algebraic curve over an arbitrary characteristic algebraically closed base field.
Let $\mathcal{E}$ be a nonzero vector bundle on $C$.  Then $\mathcal{E}$ admits a uniquely determined flag of subvector bundles
\begin{equation}\label{HN:eqn1}
0 = \mathcal{E}_0 \subsetneq \mathcal{E}_1 \subsetneq \hdots \subsetneq \mathcal{E}_\ell = \mathcal{E} 
\end{equation}
which has the two properties that
\begin{enumerate}
\item[(i)]{
the successive quotients $\mathcal{E}_i / \mathcal{E}_{i-1}$ are semistable for $i = 1,\dots,\ell$; and
}
\item[(ii)]{
the slopes of the successive quotients are strictly decreasing in the sense that
$$
\mu(\mathcal{E}_i / \mathcal{E}_{i-1}) > \mu(\mathcal{E}_{i+1} / \mathcal{E}_i)
$$
for $i = 1,\dots,\ell-1$.
}
\end{enumerate}
\end{theorem}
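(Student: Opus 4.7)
The plan is to establish Theorem \ref{HN:Existence:Thm} by induction on $r = \operatorname{rank}(\mathcal{E})$, using the subbundle which strongly contradicts semistability (SCS), as guaranteed by the Harder and Narasimhan lemma cited immediately prior to the theorem statement, as the principal engine. The base case $r = 1$, and more generally the case where $\mathcal{E}$ is already semistable, is trivial: the flag $0 \subsetneq \mathcal{E}$ satisfies (i) and (ii) vacuously. Otherwise, I would invoke Lemma 1.3.7 of \cite{Harder:Narasimhan:1974} to produce the unique SCS subbundle $\mathcal{E}_1 \subseteq \mathcal{E}$; by induction the quotient $\mathcal{E}/\mathcal{E}_1$, which has strictly smaller rank, admits a unique flag of the required type, and I would pull this flag back under $\mathcal{E} \twoheadrightarrow \mathcal{E}/\mathcal{E}_1$ to define the terms $\mathcal{E}_2 \subsetneq \mathcal{E}_3 \subsetneq \cdots \subsetneq \mathcal{E}_\ell = \mathcal{E}$ in \eqref{HN:eqn1}.

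For this candidate flag, property (i) for $i \geq 2$ and the slope inequalities of (ii) for $i \geq 2$ follow immediately from the inductive hypothesis, while (i) for $i = 1$ is the semistability of the SCS subbundle. The one nontrivial check is $\mu(\mathcal{E}_1) > \mu(\mathcal{E}_2/\mathcal{E}_1)$. Here, property (ii) of the SCS subbundle applied to the containment $\mathcal{E}_1 \subsetneq \mathcal{E}_2$ yields $\mu(\mathcal{E}_1) > \mu(\mathcal{E}_2)$, and then additivity of degree and rank across the short exact sequence
$$0 \to \mathcal{E}_1 \to \mathcal{E}_2 \to \mathcal{E}_2/\mathcal{E}_1 \to 0$$
converts this into $\mu(\mathcal{E}_1) > \mu(\mathcal{E}_2/\mathcal{E}_1)$, as required.

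Uniqueness would proceed by showing that the first term $\mathcal{E}'_1$ of any other flag satisfying (i) and (ii) is itself an SCS subbundle of $\mathcal{E}$, whereupon the uniqueness clause of Lemma 1.3.7 forces $\mathcal{E}'_1 = \mathcal{E}_1$ and the inductive hypothesis, applied to $\mathcal{E}/\mathcal{E}_1$, concludes. To verify the SCS property for $\mathcal{E}'_1$: semistability is (i), and given any subbundle $\mathcal{G} \supsetneq \mathcal{E}'_1$, I would filter $\mathcal{G}/\mathcal{E}'_1$ by its intersections with the terms $\mathcal{E}'_i/\mathcal{E}'_1$ — each successive quotient embeds into the semistable piece $\mathcal{E}'_i/\mathcal{E}'_{i-1}$, which has slope strictly less than $\mu(\mathcal{E}'_1)$. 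Semistability then forces $\mu(\mathcal{G}/\mathcal{E}'_1) < \mu(\mathcal{E}'_1)$, and an application of the slope identity across $0 \to \mathcal{E}'_1 \to \mathcal{G} \to \mathcal{G}/\mathcal{E}'_1 \to 0$ upgrades this to $\mu(\mathcal{G}) < \mu(\mathcal{E}'_1)$.

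I expect the main technical obstacle to lie inside Lemma 1.3.7 rather than in the induction itself — namely, the finiteness and attainment of $\mu_{\max}(\mathcal{E}) := \sup\{\mu(\mathcal{F}) : 0 \neq \mathcal{F} \subseteq \mathcal{E}\}$ by a saturated subsheaf. The proof of this boundedness reduces to the statement that, for each fixed rank $s$, the degrees of rank-$s$ subsheaves of $\mathcal{E}$ are bounded above. The rank-one instance is classical, via the finite dimensionality of spaces of global sections of Hom sheaves twisted by a sufficiently ample divisor; the higher rank instance follows by passing to determinants and reducing to the rank-one situation. Given these ingredients, the uniqueness assertion within Lemma 1.3.7 — that the sum of two subsheaves of maximal slope again has maximal slope — is a short computation using additivity across the sum-intersection exact sequence, so the deeper difficulty is genuinely the boundedness.
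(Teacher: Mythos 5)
The paper does not actually prove this theorem---it is quoted verbatim from Harder and Narasimhan's original article---but your argument is the standard one and follows exactly the route the surrounding text indicates: induction on the rank of $\mathcal{E}$ driven by the subbundle that strongly contradicts semistability from Lemma 1.3.7 of \cite{Harder:Narasimhan:1974}, with all slope comparisons handled by the weighted-average identity for short exact sequences. Your proof is correct, including the reduction of uniqueness to the uniqueness of that maximal destabilizing subbundle and the correct identification of the genuine technical content as the boundedness (and attainment) of slopes of subsheaves of fixed rank.
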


In working with the Harder and Narasimhan filtrations \eqref{HN:eqn1}, in what follows we find it useful to put
$$
\mu_i := \mu(\mathcal{E}_i / \mathcal{E}_{i-1})
\text{ and }
r_i := \operatorname{rank}(\mathcal{E}_i / \mathcal{E}_{i-1}) \text{,}
$$
for $i = 1,\dots,\ell$.
By this notation 
$r_i > 0\text{,}$ 
for all $i = 1,\dots,\ell$,
$$
\mu_1 > \dots > \mu_\ell 
$$
and
$$
\sum_{i=1}^\ell r_i = r \text{.}
$$
Moreover, if
$$
\mu := \mu\left(\mathcal{E}\right) \text{, }
$$
then
$$
\mu = \frac{ \sum_{i=1}^\ell \mu_i r_i }{r} \text{.}
$$
Finally, we refer to the bundles $\mathcal{E}_i$, for $i = 1,\dots,\ell$, which arise in the filtration \eqref{HN:eqn1}, as the \emph{Harder and Narasimhan subbundles} of $\mathcal{E}$. 

\section{The Harder and Narasimhan polygons and their vertices}\label{HN:polygons:and:vertices}

A concept of \emph{Harder and Narasimhan Polygons} emerged as a tool for expanding upon Harder and Narasimhan's theory of filtrations for vector bundles on curves. This was popularized by Grayson in his work on lattice reduction theory \cite{Grayson:1984}.  We refer to the article \cite{Casselman:2004}, by Casselman, for an exposition.   Here, our viewpoint is to associate a polygon to \emph{Harder and Narasimhan data}.  We make this precise in Definitions \ref{HN:defns}. 

Another context in which a fruitful theory of Narasimhan Polygons has emerged is that of filtered vector spaces.  Such developments have been made possible by work of Faltings and W\"{u}stholz \cite{Faltings:Wustholz}, Chen \cite{Chen:2010:b} and others.  In Example \ref{filt:v:sp:example}, below, and because of its relevance to Theorem \ref{filt:HN:tensor:cor}, we indicate how our notion of \emph{Harder and Narasimhan data} fits within the framework of the Harder and Narasimhan filtration that is associated to each filtered vector space.  

Within the context of Diophantine approximation, the theory of Harder and Narasimham filtrations for vector spaces is an important aspect of the work of Faltings and W\"{u}stholz \cite{Faltings:Wustholz}.  Similar filtrations together with a theory of polygons arise, more recently, in the work of  Evertse and Ferretti (see \cite[Section 15]{Evertse:Ferretti:2013}).  

Of interest, for our purposes here, is the \emph{asymptotic probabilistic nature} of such \emph{Harder and Narasimhan Polygons}.  Exactly what is meant by this is  made precise below.

\begin{defns}\label{HN:defns}
By \emph{Harder and Narasimhan data}, is meant a collection of strictly decreasing rational numbers 
$$\mu_1 > \dots > \mu_\ell$$ 
together with a collection of positive  integers $r_i > 0$, for $i = 1,\dots,\ell$.  Denote such data by $\operatorname{HN}\left(\overrightarrow{\mu},\overrightarrow{r}\right)$.  Here 
$$\overrightarrow{\mu} := (\mu_1,\dots,\mu_\ell) 
\text{ and }
\overrightarrow{r} := (r_1,\dots,r_\ell)\text{.}$$  
The integer $\ell$ is called the \emph{length}.  

Put 
$$r := \sum_{i=1}^\ell r_i\text{.}$$  
This is the \emph{rank} and the $r_i$ are the \emph{subquotient ranks}.  

Finally, upon setting 
$$d_i := r_i \mu_i\text{,}$$ 
for $i = 1,\dots,\ell$, and 
$$\overrightarrow{d} := (d_1,\dots,d_\ell)\text{,}$$ 
we obtain the \emph{degree vector}. If 
$$\left| \overrightarrow{d} \right| := \sum_{i=1}^\ell d_i > 0 \text{,}$$ 
then the Harder and Narasimhan data $\operatorname{HN}\left(\overrightarrow{\mu},\overrightarrow{r}\right)$ is said to have \emph{positive degree}.  

Further, set
$$
\overrightarrow{p} := (p_1,\dots,p_\ell)
$$
where
$$
p_i := \frac{r_i}{r} \text{,}
$$
for $i = 1,\dots,\ell$.  Then the $p_i$ are the Harder and Narasimhan \emph{probabilities} while $\overrightarrow{p}$ is the Harder and Narasimhan \emph{probability vector}.

Finally, similar to the approach of \cite[Definition 1.10 and Discussion 1.16]{Grayson:1984} and \cite[p.~630]{Casselman:2004}, the \emph{vertices} of Harder and Narasimhan data $\operatorname{HN}\left(\overrightarrow{\mu},\overrightarrow{r} \right)$,  is defined to be the collection of points
$$
\operatorname{Verticies}\left(\overrightarrow{r}, \overrightarrow{d}\right) := \left\{ (r_i, d_i) : i = 1,\dots,\ell \right\} \text{.}
$$
(Compare also with \cite[Remark 2.2.8]{Chen:2010:b}.)
\end{defns}

\begin{example}\label{filt:v:sp:example}  As in \cite[Section 4]{Faltings:Wustholz},
Let $V$ be a finite dimensional $\kk$-vector space and consider, given a collection of real numbers
$$
0 \leq \lambda_0 < \lambda_1 < \dots < \lambda_n < \lambda_{n+1}\text{,}
$$
a filtration of the form
\begin{equation}\label{filt:vsp:eqn1}
V = F^{\lambda_0} V \supsetneq F^{\lambda_1} V \supsetneq \dots \supsetneq F^{\lambda_n} V \supsetneq F^{\lambda_{n +1}}V = 0 \text{.}
\end{equation}
The filtered vector space \eqref{filt:vsp:eqn1} is called \emph{semistable} if it holds true that 
\begin{equation}\label{filt:vsp:eqn2}
\mu(V') \leq \mu(V)
\end{equation}
for each proper nonzero subspace
$$
0 \not = V' \subsetneq V \text{.}
$$
In \eqref{filt:vsp:eqn2}, the slope of $V'$ is calculated with respect to the induced filtration.  

As noted by Faltings and W\"{u}stholz, each such filtered vector space \eqref{filt:vsp:eqn1} admits a \emph{canonical Harder and Narasimhan filtration}.  
In more specific terms, there exists a flag of vector spaces
$$
0 = V_0 \subsetneq V_1 \subsetneq \dots \subsetneq V_{\ell} = V \text{,}
$$
which have the property that each successive quotient $V_i / V_{i - 1}$, for $i = 1,\dots,\ell$, is semistable (which respect to the induced filtration), and furthermore, the slopes $\mu(V_i / V_{i-1})$ of the successive quotients are strictly decreasing.

Within this context, putting 
$$
\mu_i := \mu(V_i / V_{i-1}) \text{
and }
r_i := \dim(V_i / V_{i-1}) \text{,}
$$
for $i = 1,\dots,\ell$ and setting
$$
\overrightarrow{\mu} := (\mu_1,\dots,\mu_\ell)
\text{ 
and
 }
\overrightarrow{r} := (r_1,\dots,r_\ell)
$$
yields Harder and Narasimhan data $\operatorname{HN}(\overrightarrow{\mu},\overrightarrow{r})$.
\end{example}

\section{Tensor products of Harder and Narasimhan subbundles}\label{tensor:product:construction}

To provide motivation for our construction with vertices  of Harder and Narasimhan subbundles in Section \ref{Main:Theorem:Proof}, here we discuss a related construction of \cite{Codogni:Patakfalvi:2021}, which was the starting point for our investigation here.  

These constructions from \cite{Codogni:Patakfalvi:2021}, build on Viehweg's fiber product approach from \cite{Viehweg:1983}, for proving weak positivity results for direct images of tensor powers of relative canonical sheaves.

A representative example for the techniques of \cite[Section 5]{Codogni:Patakfalvi:2021}, is explained in \cite[Remark 5.6]{Codogni:Patakfalvi:2021}.  We reproduce some of that discussion here.   It provides motivation for our analogous results which apply to the context of filtered vector spaces. (See Theorem \ref{filt:HN:tensor:cor} and Example \ref{filt:v:sp:example}.)

In particular, working over the projective line $\PP^1_{\kk}$, consider the case of a vector bundle
$$
\mathcal{E} := \bigoplus_{1 \leq i \leq \ell} \Osh_{\PP^1_{\kk}}(b_i)^{\oplus n_i}
$$
where 
$b_i \in \ZZ$ 
and 
$b_i < b_{i+1}\text{,}$ 
for $i = 1,\dots,\ell-1$.  

With respect to the Harder and Narasimhan filtrations for $\mathcal{E}$, the $i$th Harder and Narasimhan submodule is
$$
\mathcal{E}_i := \bigoplus_{1 \leq j \leq i} \Osh_{\PP^1}(b_j)^{\oplus n_j} \text{.}
$$
The $i$th subquotient slope is thus 
$$
\mu_i := \mu\left( \mathcal{E}_i / \mathcal{E}_{i-1} \right) = b_i \text{.}
$$
Now, fixing a positive integer $m > 0$, the idea is to study via probabilistic methods, inside of $\mathcal{E}^{\otimes m}$, the prevalence of those submodules which are obtained via the $m$-fold tensor products
$$
\bigotimes_{i=1}^m \mathcal{E}_{a_i} \subseteq \mathcal{E}^{\otimes m} \text{.}
$$
Here, 
$a_i \in \{1,\dots,\ell \}\text{,}$
for $i = 1,\dots,m$, and 
$$
\sum_{i=1}^m \mu_{a_i} \geq z \text{,}
$$
for 
$z \in \ZZ_{\geq 0}$ 
some given nonnegative integer.

In Section \ref{Main:Theorem:Proof}, we generalize this construction from \cite{Codogni:Patakfalvi:2021}, so as to treat the more general context of Harder and Narasimhan vertices.
 
\section{Statement of Main Theorem and its proof}\label{Main:Theorem:Proof}

In order to state our main theorem (see Theorem \ref{HN:vertex:CLT} below), let us first consider the construction from Section \ref{tensor:product:construction} in a slightly more general context.

Let 
$[\ell]:= \{1,\dots,\ell\}\text{.}$  
Fixing a positive integer $m \in \ZZ_{>0}$, let $[\ell]^m$ be the $m$-fold Cartesian product of $[\ell]$.  

Let $\succ$ be the partial order on $[\ell]^m$ that is defined in the following way.  If 
$\mathbf{a}, \mathbf{b} \in [\ell]^m\text{,}$ 
then 
$\mathbf{a} \succeq \mathbf{b}$ 
if 
$a_j \geq b_j$ 
for all $j = 1,\dots,m$; whereas 
$\mathbf{a} \succ \mathbf{b}$ 
if 
$\mathbf{a} \succeq \mathbf{b}$ 
and 
$a_j > b_j$ 
for some $j = 1,\dots, m$.

Now, fix Harder and Narasimhan data $\operatorname{HN}\left(\overrightarrow{\mu},\overrightarrow{r}\right)$.   Of particular interest, is the case that the Harder and Narasimhan data $\operatorname{HN}\left(\overrightarrow{\mu},\overrightarrow{r}\right)$ has positive degree.  But, on the other hand, we do not impose that condition for the present time being.

Inside of $[\ell]^m$, define the subset $S_{m,z}(\overrightarrow{\mu})$, for 
$z \in \ZZ_{\geq 0}$ 
a fixed nonnegative integer, by the condition that
$$
S_{m,z}\left(\overrightarrow{\mu}\right) := \left\{ \mathbf{a} = (a_1,\dots,a_m) \in [\ell]^m : \sum_{j=1}^m \mu_{a_j} \geq z \right\} \text{.}
$$
If 
$
\mathbf{a} \in [\ell]^m \text{,}
$
then put
$$
v_{\overrightarrow{\mu}}(\mathbf{a}) := \sum_{j=1}^m \mu_{a_j} \text{.}
$$

The set $S_{m,z}\left(\overrightarrow{\mu}\right)$ has the following two properties:
\begin{enumerate}
\item[(i)]{
if $\mathbf{a} \in S_{m,z}(\overrightarrow{\mu})\text{,}$ 
$\mathbf{b} \in [\ell]^m$ 
and 
$\mathbf{a} \succeq \mathbf{b}\text{,}$ 
then 
$\mathbf{b} \in S_{m,z}(\overrightarrow{\mu})\text{;}$ 
and
}
\item[(ii)]{
if 
$\mathbf{a} \in [\ell]^m\text{,}$ 
then 
$v_{\overrightarrow{\mu}}(\mathbf{a}) \geq z\text{,}$ 
if and only if 
$\mathbf{a} \in S_{m,z}(\overrightarrow{\mu})\text{.}$
}
\end{enumerate}

Henceforth put
$$
d := \# S_{m,z}(\overrightarrow{\mu}) \text{.}
$$
Then, upon arranging the elements of $[\ell]^m$ in decreasing order, with respect to $v_{\overrightarrow{\mu}}(\cdot)$, we may write
$$
[\ell]^m = S_{m,z}(\overrightarrow{\mu}) \bigsqcup \{\mathbf{a}_{d+1},\dots,\mathbf{a}_e \}
$$
where
$$
S_{m,z}(\overrightarrow{\mu}) = \{\mathbf{a}_1,\dots,\mathbf{a}_d\} \text{,}
$$
$$
v_{\overrightarrow{\mu}}(\mathbf{a}_{d+1}) \geq \dots \geq v_{\overrightarrow{\mu}}(\mathbf{a}_e)
$$
and
$
e = \ell m \text{.}
$
In this way, we may denote elements of $[\ell]^m$ as 
$$
\mathbf{a}_i := (a_{1i},\dots,a_{mi}) \text{,}
$$
for $1 \leq i \leq e$.

Recall, the Harder and Narasimhan probabilities
$$
p_i := \frac{r_i}{r} \text{,}
$$
for $i = 1,\dots,\ell$.  Then
$$
\sum_{i=1}^\ell p_i = 1
$$
and, for later use in Section \ref{filt:HN:tensor:cor:Proof}, we note that
\begin{equation}\label{CLT:prob:sum:random:var}
\sum_{i = 1}^d \prod_{j=1}^m \frac{r_{a_{ji}}}{r}  = \sum_{\mathbf{a} \in S_{m,z}(\overrightarrow{\mu})} \prod_{j=1}^m \frac{r_{a_j}}{r} \\ 
 = \sum_{\mathbf{a} \in S_{m,z}(\overrightarrow{\mu})} \prod_{j=1}^m p_{a_j} \text{.}
\end{equation}

Having fixed notation as above, our main theorem (Theorem \ref{HN:vertex:CLT} below) expands upon \cite[Theorem 5.11]{Codogni:Patakfalvi:2021}.

\begin{theorem}\label{HN:vertex:CLT}
Let $\operatorname{HN}\left(\overrightarrow{\mu}, \overrightarrow{r} \right)$ be Harder and Narasimhan data.  Assume that $\operatorname{HN}\left(\overrightarrow{\mu},\overrightarrow{r}\right)$ has positive degree.  Let $\overrightarrow{p} = (p_1,\dots,p_\ell)$ be its probability vector.  Fix a positive integer $m>0$ and a nonnegative integer $z \geq 0$.  It then holds true that
$$
\lim_{m \to \infty} \sum_{\mathbf{a} \in S_{m,z}(\overrightarrow{\mu}) } \prod_{j=1}^m p_{a_j} = 1 \text{.}
$$
\end{theorem}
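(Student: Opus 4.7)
The plan is to recognize the sum on the left as a probability involving i.i.d.\ random variables and then to invoke the Weak Law of Large Numbers (Theorem \ref{Weak:Law:Large:Numbers}) together with the positivity hypothesis on the degree.

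First, I set up the probabilistic model. Let $\mathcal{Y} = \mathcal{Y}([\ell], p_1, \dots, p_\ell)$ be the discrete probability space on $[\ell]$ with probabilities $p_1, \dots, p_\ell$, and let $Y_1, Y_2, \dots$ be a sequence of independent, identically distributed random variables on $\mathcal{Y}$ with $Y_j$ taking the value $\mu_i$ on $i \in [\ell]$. Since the samples are independent and the elementary event $\{(a_1,\dots,a_m)\}$ of $[\ell]^m$ has probability $\prod_{j=1}^m p_{a_j}$, the defining condition of $S_{m,z}(\overrightarrow{\mu})$ gives the identification
\begin{equation*}
\sum_{\mathbf{a} \in S_{m,z}(\overrightarrow{\mu})} \prod_{j=1}^m p_{a_j} \; = \; \operatorname{Prob}\left( \sum_{j=1}^m Y_j \geq z \right) \text{.}
\end{equation*}
Thus the theorem is reduced to showing that this probability tends to $1$ as $m \to \infty$.

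Next, I compute the common expected value. By definition of the Harder and Narasimhan probabilities and the degree vector,
\begin{equation*}
\mu := E(Y_j) \; = \; \sum_{i=1}^\ell p_i \mu_i \; = \; \sum_{i=1}^\ell \frac{r_i \mu_i}{r} \; = \; \frac{|\overrightarrow{d}|}{r} \text{,}
\end{equation*}
which is strictly positive by the positive degree hypothesis. Write $S_m := Y_1 + \dots + Y_m$.

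Now choose $\epsilon$ with $0 < \epsilon < \mu$, so that $\mu - \epsilon > 0$, and fix an integer $m_0$ such that $m_0 (\mu - \epsilon) \geq z$ (possible because $z$ is fixed and $\mu - \epsilon > 0$). For any $m \geq m_0$, on the event $\{|m^{-1} S_m - \mu| < \epsilon\}$ one has $S_m > m(\mu - \epsilon) \geq z$, so that event is contained in $\{S_m \geq z\}$. Therefore
\begin{equation*}
\operatorname{Prob}(S_m \geq z) \; \geq \; 1 - \operatorname{Prob}\bigl( |m^{-1} S_m - \mu| \geq \epsilon \bigr) \text{.}
\end{equation*}
Since the $Y_j$ are simple, independent, identically distributed with common expected value $\mu$, the Weak Law of Large Numbers (Theorem \ref{Weak:Law:Large:Numbers}) yields that the right-hand probability tends to $0$, from which the desired limit follows.

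The argument is structurally short; the only point requiring care is the translation from the combinatorial sum over $S_{m,z}(\overrightarrow{\mu})$ to the probabilistic statement about $\{S_m \geq z\}$, which hinges on correctly matching the Harder and Narasimhan probabilities $p_i = r_i/r$ with the distribution of each $Y_j$ and on exploiting positivity of $|\overrightarrow{d}|$ to ensure $\mu > 0$. No further probabilistic input beyond the Weak Law of Large Numbers is needed, although one could equally invoke the Strong Law or the Central Limit Theorem to obtain quantitative refinements of the rate of convergence, a question raised in the introduction.
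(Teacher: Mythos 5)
Your proof is correct. The reduction is identical to the paper's: you introduce the same discrete probability space $\mathcal{Y}([\ell],p_1,\dots,p_\ell)$ and i.i.d.\ variables $Y_j$, identify $\sum_{\mathbf{a}\in S_{m,z}(\overrightarrow{\mu})}\prod_j p_{a_j}$ with $\operatorname{Prob}(S_m\geq z)$, and use positivity of $|\overrightarrow{d}|$ to get $\mu = |\overrightarrow{d}|/r>0$. Where you diverge is the concluding limit theorem. The paper invokes the Central Limit Theorem: for each real $y$ it shows $\{(Z_m-m\mu)/\sqrt{m}\geq y\}\subseteq\{Z_m\geq z\}$ for $m$ large, deduces $\liminf_m\operatorname{Prob}(Z_m\geq z)\geq\operatorname{Prob}(\Phi\geq y)$, and then lets $y\to-\infty$. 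You instead choose $0<\epsilon<\mu$ and $m_0$ with $m_0(\mu-\epsilon)\geq z$, observe that $\{|m^{-1}S_m-\mu|<\epsilon\}\subseteq\{S_m\geq z\}$ for $m\geq m_0$, and finish with the Weak Law of Large Numbers (which applies since the $Y_j$ are simple). Your route is more elementary and avoids the two-step limit over $y$; it is essentially the alternative the paper itself acknowledges in the remark following its proof, where it notes that Chebyshev's inequality (the standard engine behind the Weak Law here) suffices in place of the Central Limit Theorem. What the paper's heavier approach buys is consonance with the title theme and, in principle, access to the Gaussian rate information relevant to the rate-of-convergence question raised in the introduction; your approach buys brevity and minimal hypotheses.
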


\begin{proof}[Proof of Theorems \ref{HN:vertex:CLT} and \ref{main:theorem:intro}]
Let 
$$\mathcal{Y} = \mathcal{Y}\left([\ell],p_1,\dots,p_{\ell}\right)$$ 
be the discrete probability  space on the set $[\ell]$ and having probability measures $p_1,\dots,p_\ell$.  Let 
$$Y_j := Y_j(\overrightarrow{\mu})$$ 
be a sequence of independent, identically distributed random variables of $\mathcal{Y}$ that take value $\mu_i$ on $i$.  

Let 
$$
Z_m := \sum_{j=1}^m Y_j \text{.}
$$
Then, by considering the definition of the set $S_{m,z}(\overrightarrow{\mu})$ and the random variables $Y_1, Y_2,\dots$, it follows that 
\begin{equation}\label{CLT:Main:Eqn2}
 \operatorname{Prob}\left( Z_m \geq z \right)  = \operatorname{Prob}\left(\sum_{j=1}^m Y_j \geq z \right) 
  =
  \sum_{\mathbf{a} \in S_{m,z}(\overrightarrow{\mu})} \prod_{j=1}^m p_{a_j} \text{.}
\end{equation}

Now, we apply the Central Limit Theorem (Theorem \ref{Central:Limit:Theorem}), to show that 
\begin{equation}\label{CLT:Main:Eqn3}
\lim_{m \to \infty} \operatorname{Prob}\left( Z_m \geq z \right) = 1 \text{.}
\end{equation}
To this end, first note that $\mathcal{Y}$ is a finite metric space.  Moreover, the independent and identically distributed random variables $Y_j$ have finite mean and variance.  These quantities are independent of $j$.  Denote them, respectively, by $\mu$ and $\sigma$.

The Central Limit Theorem, for independent identically distributed random variables (Theorem \ref{Central:Limit:Theorem}), thus implies that the random variable 
$$\frac{Z_m - m \mu}{\sqrt{m}}$$ 
converges weakly to a normal distribution $\Phi$ with expected value $0$ and covariance $\sigma^2$.

In particular, for each real number $y$ it holds true that
$$
\lim_{m\to \infty} \operatorname{Prob}\left( \frac{ Z_m - m \mu}{ \sqrt{m} } \geq y \right) = \operatorname{Prob}\left( \Phi \geq y \right) \text{.}
$$

Now, recall that the Harder and Narasimhan data $\operatorname{HN}\left(\overrightarrow{\mu},\overrightarrow{r}\right)$ has positive degree. It thus follows that
$$
\mu = \sum_{i=1}^\ell \mu_i p_i = \frac{\sum_{i=1}^\ell \mu_i r_i }{r} > 0 \text{.}
$$
Thus, fixing a real number $y$, there is a positive integer 
$m_y > 0$ 
such that if 
$m \geq m_y\text{,}$ 
then
$$
z \leq y \sqrt{m} + m \mu \leq Z_m
$$
provided that
$$
\frac{Z_m - m \mu}{\sqrt{m}} \geq y \text{.}
$$

The above discussion implies, in particular, that
$$
\liminf_{m \to \infty} \operatorname{Prob} \left(Z_m \geq z \right) \geq \operatorname{Prob} \left(\Phi \geq y \right)
$$
for all real numbers $y$.
On the other hand, note that
$$
\lim_{y \to - \infty} \operatorname{Prob} \left(\Phi \geq y \right) = 1 \text{.}
$$
Finally, since
$$
\liminf_{m \to \infty} \operatorname{Prob}\left( Z_m \geq z \right) = 1
$$
and
$$
\operatorname{Prob}\left( Z_m \geq z \right) \leq 1 \text{,}
$$
for all $m$, it holds true that
$$
\lim_{m \to \infty} \operatorname{Prob}\left( Z_m \geq z \right) = 1 \text{.}
$$
This establishes the validity of the relation \eqref{CLT:Main:Eqn3}.
\end{proof}

\begin{remark}
As mentioned in \cite[Remark 5.12]{Codogni:Patakfalvi:2021}, the relation  \eqref{CLT:Main:Eqn3} may be established using the Chebyshev's inequality in place of the Central Limit Theorem.
\end{remark}

\section{Proof of Theorem \ref{filt:HN:tensor:cor}}\label{filt:HN:tensor:cor:Proof}

Finally, we indicate the manner in which Theorem \ref{filt:HN:tensor:cor} follows from Theorems \ref{HN:vertex:CLT} and \ref{main:theorem:intro}.  The key point to the proof is to adapt the proof of \cite[Proposition 5.9]{Codogni:Patakfalvi:2021} to the context of filtered vector spaces.  Having done this, Theorem \ref{filt:HN:tensor:cor} follows from Theorem \ref{HN:vertex:CLT}, essentially because of the relation \eqref{CLT:prob:sum:random:var}.

\begin{proof}[Proof of  Theorem \ref{filt:HN:tensor:cor}]

Consider the Harder and Narasimhan filtration \eqref{filt:vp:space:eqn2}
that is associated to the filtered vector space \eqref{filt:vp:space:eqn1}.
Recall, respectively, the corresponding slopes, degrees and ranks
$$
\mu_i = \mu(V_i / V_{i-1}) \text{, }
r_i = \dim (V_i / V_{i-1}) \text{ and }
d_i = r_i \mu_i \text{,}
$$
for $i = 1,\dots,\ell$.  
Put 
$$
r := \dim V \text{.}
$$

For each
$$
\mathbf{a} \in [\ell]^m \text{,}
$$
recall that we have put
$$
v_{\overrightarrow{\mu}}(\mathbf{a}) = \sum_{j=1}^m \mu_{a_j} \text{.}
$$
Moreover, we arrange the elements of $[\ell]^m$ is decreasing order with respect to $v_{\overrightarrow{mu}}(\cdot)$.  We write this as
$$
[\ell]^m := \{\mathbf{a}_1=(a_{11},\dots,a_{m1}),\dots,\mathbf{a}_{\ell m} = (a_{1 \ell m },\dots,a_{m \ell m}) \} \text{.}
$$
Finally, we have defined the set
$$
S_{m,z}(\overrightarrow{\mu}) := \left\{\mathbf{a} = (a_1,\dots,a_m) \in [\ell]^m : \sum_{j=1}^m \mu_{a_j} \geq z \right\} \text{;}
$$
then $S_{m,z}(\overrightarrow{\mu})$ consists of the the first $\# S_{m,z}(\overrightarrow{\mu})$ elements of $[\ell]^m$, with respect to the ordering that is induced by $v_{\overrightarrow{mu}}(\cdot)$.

Now, for all $1 \leq i \leq \ell m$, inside of $V^{\otimes m}$, define
$$
F^i := \bigotimes_{j=1}^m V_{a_{ji}} \text{, }
$$
$$
H^i := \sum_{j=1}^i F^i 
$$
and
$$
G^i := \bigotimes_{j=1}^m \left(F^{a_{ji}} / F^{a_{ji} - 1} \right) \text{.}
$$
Then
$$
\dim(G^i) = \prod_{j=1}^m r_{a_{ji}}\text{.}
$$
Let us also mention that, with respect to the filtration on $G^i$ that is induced by the given filtration on $V$, the slope of $G^i$ is 
$$
\mu(G^i)  = \sum_{j=1}^m \mu(F^{a_{ji}} / F^{a_{ji}-1}) 
 = \sum_{j=1}^m \mu_{a_{ji}} \text{.}
$$
(We refer to \cite[Section 1]{Fujimori:2003}, for example, for more details about the behaviour of slopes under taking tensor products and quotients.)

We next 
establish isomorphisms
\begin{equation}\label{isom:eqn:1}
H^i / H^{i-1} \simeq G^i \text{,}
\end{equation}
for $1 \leq i \leq \ell m$.  The isomorphisms \eqref{isom:eqn:1} are a key point to the proof of Theorem \ref{filt:HN:tensor:cor}.  
Assuming their existence it then follows that 
\begin{equation}\label{isom:eqn:2}
\dim H^{\# S_{m,z}(\overrightarrow{\mu})}  = \sum_{i=1}^{\# S_{m,z}(\overrightarrow{\mu})} \dim G^i \\
 = \sum_{i=1}^{\# S_{m,z}(\overrightarrow{\mu})} \prod_{j=1}^m r_{a_{ji}} \text{.}
\end{equation}
The desired conclusion then follows, since, in light of \eqref{isom:eqn:2}, it follows that
$$
\frac{\dim H^{\# S_{m,z}(\overrightarrow{\mu})}}{\dim V^{\otimes m}} = \sum_{i=1}^{\# S_{m,z}(\overrightarrow{\mu})} \prod_{j=1}^m \frac{r_{a_{ji}}}{r} \text{.}
$$

But, by \eqref{CLT:prob:sum:random:var}, this may be rewritten as
$$
\frac{\dim H^{\# S_{m,z}(\overrightarrow{\mu})}}{\dim V^{\otimes m}}  = \sum_{ \mathbf{a} \in S_{m,z}(\overrightarrow{\mu})} \prod_{j=1}^m \frac{r_{a_j}}{r} \\
 = \sum_{ \mathbf{a} \in S_{m,z}(\overrightarrow{\mu})} \prod_{j=1}^m p_{a_j} \text{.}
$$
In particular, it then follows from Theorem \ref{HN:vertex:CLT} that 
$$
\lim_{m \to \infty} \frac{\dim H^{\# S_{m,z} (\overrightarrow{\mu})}}{ \dim V^{\otimes m} }  = \lim_{m\to\infty} \sum_{\mathbf{a} \in S_{m,z}(\overrightarrow{\mu})} \prod_{j=1}^m p_{a_j} = 1 \text{.}
$$

It remains to establish the isomorphisms \eqref{isom:eqn:1}, for each $1 \leq i \leq \ell m$.  To this end, we first make note the following vector space isomorphisms
$$
H^{i} / H^{i-1} = (H^{i-1} + F^i) / H^{i-1} \simeq F^i / \left(F^i \bigcap H^{i-1}\right)
$$
and
$$
G^i \simeq F^i / \left(\sum_{\mathbf{a}_{i} \succ \mathbf{a}_{i'}} F^{i'}\right) \text{;}
$$
there is a naturally defined surjection
\begin{equation}\label{isom:eqn:3}
G^i \twoheadrightarrow H^i / H^{i-1} \text{.}
\end{equation}

But, on the other hand
$$
\dim(V^{\otimes m}) = \sum_{i=1}^{\ell m} \dim (H^i / H^{i-1}) \leq \sum_{i=1}^{\ell m} \dim G^i
$$
and
$$
\sum_{i=1}^{\ell m} \dim G^i  = \sum_{\mathbf{a}_i = (a_{1i},\dots,a_{mi}) \in [\ell]^m} \left( \prod_{j=1}^m r_{a_{ji}}\right) 
= \left(\sum_{i=1}^\ell r_i \right)^m 
 = \dim V^{\otimes m} \text{.}
$$
The conclusion is then that 
$$
\dim(V^{\otimes m}) = \sum_{i =1}^{\ell m} \dim G^i  \text{,}
$$
whence, by dimension reasons,
$$
\dim(H^i / H^{i-1}) = \dim G^i \text{,}
$$
for all $0 \leq i \leq \ell m$.  The surjections \eqref{isom:eqn:3} and thus injective. The desired isomorphisms \eqref{isom:eqn:1} are thus established.
\end{proof}

\providecommand{\bysame}{\leavevmode\hbox to3em{\hrulefill}\thinspace}
\providecommand{\MR}{\relax\ifhmode\unskip\space\fi MR }
\providecommand{\MRhref}[2]{%
  \href{http://www.ams.org/mathscinet-getitem?mr=#1}{#2}
}
\providecommand{\href}[2]{#2}

\end{document}